\begin{document}


\section{Introduction}
The \emph{cutoff phenomenon}, first denoted with this name by Aldous and Diaconis \cite{AD86}, is one of the most intriguing and well studied topics in the probabilistic literature of the last 30 years. Indeed, despite the topic received much attention, and the list of Markov processes for which it is possible to show such a behavior is now quite long, a general sufficient condition ensuring the presence of cutoff is still missing. To this aim, it could be beneficial to reach a good understanding of  the mixing time of Markov chains that are obtained by perturbing an original chain that is known to exhibit cutoff. How does the mixing behavior of the modified chain depend on the perturbation? To which extent is the cutoff phenomenon robust? In the last few years, similar questions have been investigated in \cite{AGHH19,AGHHN22,CQrsa,CQspa}. Despite the models in the aforementioned papers are quite different, they can all be framed into a setting in which the perturbed chain is affected by the competition of two different mixing mechanisms: on the one hand, the abrupt convergence to equilibrium of the original chain, while, on the other hand, a smooth decay to equilibrium arising as an effect of the perturbation. Moreover, the models in \cite{AGHH19,AGHHN22,CQspa} share the common feature that the mixing stochastic process under investigation is not a Markov process itself, but rather a non-Markovian observable of a Markov process. A similar investigation, namely the analysis of mixing behavior of observables (or \emph{features} or \emph{statistics}) of Markov chains, has been recently performed in \cite{W19a,W19b} in the context of card shuffling routines and related models.

From the phenomenological point of view, the moral of the results in \cite{AGHH19,AGHHN22,CQrsa,CQspa} is that there exists a threshold for the perturbation parameter below which the mixing behavior of the process is unaffected. When the parameter exceeds the threshold, then the perturbation becomes dominant and the convergence to equilibrium takes place in a smooth fashion. Finally, and more interestingly, a non trivial mixing behavior is shown to take place at the interphase between these two regimes.

In the same spirit, this short note aims at analyzing the convergence to equilibrium of a two-parameter perturbation of one of the most classical examples of Markov chains exhibiting cutoff:  the Ehrenfest urn. Beyond the classical Ehrenfest urn model, \textcolor{black}{\cite{D96}}, the cutoff phenomenon is particularly well understood in the more general setting of birth-and-death chains \cite{CSC13,CSC15,DLP10,DSC06}. Therefore, it seems natural to consider perturbed versions of this classical model and check for the robustness of the cutoff phenomenon with respect to the perturbation. Contrarily to the works \cite{AGHH19,AGHHN22,CQrsa,CQspa}, the proof of our result does not require a strong technical machinery. Indeed, as it will be explained in Section \ref{sec:tv-ub}, the proof relies on a recently introduced approach to the convergence to equilibrium of spin systems, based on negative dependence, which has been developed by Salez \cite{S22} to prove cutoff for the (reversible) Simple Exclusion Process with reservoirs on arbitrary graphs. Such a technique is purely probabilistic, astonishingly simple, and perfectly suited for the analysis of our model.
\section{Model and results}
We consider the following generalization of the classical Ehrenfest urn model. There are $N$ balls divided in two urns. \textcolor{black}{A  number $m=m_N\in\{1,\dots,N-1 \}$ of} balls are \emph{heavy}, while the remaining $\textcolor{black}{n=n_N=N-m_N}$ balls are \emph{regular}. Each ball is selected at the arrival times of an independent Poisson process having rate $1$ if the ball is regular, and rate $\alpha\in(0,1\textcolor{black}{)}$ if the ball is heavy. When a ball is selected, it decides to which urn to move by tossing a fair coin, independently of the rest. Assuming that one can distinguish between regular and heavy balls, but that within the same class the balls are indistinguishable\textcolor{black}{, for any given $N$, $m$ and $\alpha$ as above, our model can be described as a Markov chain $(R_t,H_t)_{t\ge 0}$ with state space $\mathcal{X}\coloneqq\{0,1,\dots n \}\times\{0,1,\dots,m\}$, where the state $(a,b)\in \mathcal{X}$ stands for the configuration in which in the left urn there are $a$ regular balls and $b$ heavy balls.}
It is immediate to check that such a Markov chain is reversible with respect to the stationary distribution:
$$\chi_\star(a,b)\coloneqq\frac{\binom{n}{a}}{2^n} \frac{\binom{m}{b}}{2^m},\qquad (a,b)\in\mathcal{X}\ .$$
\textcolor{black}{In what follows, when the the initial configuration is $(r,h)\in \mathcal{X}$ i.e., $\mathbb{P}(R_0=r,H_0=h)=1$, we will denote by $\chi_t^{r,h}$ the distribution of $(R_t,H_t)$.}
We are interested in the limit $N\to \infty$, and we allow $\alpha$ \textcolor{black}{and $m$} to depend on $N$.
\begin{color}{black}
	In what follows, it will turn out to be useful to define 
	\begin{equation}\label{eq:def-beta}
		\beta=\beta_N=\frac{\log(m_N)}{\log(N)}\,.
	\end{equation}
\end{color}
As a first step, it is worth \textcolor{black}{pointing} out which is the mixing behavior of this Markov chain. To this scope, call
\begin{color}{black}
\begin{equation*}
	\widetilde{\mathcal{D}}_{\alpha,\textcolor{black}{m}}^{N,\rm tv}(t)\coloneqq\max_{(r,h)\in\mathcal{X}}\|\chi_t^{r,h}-\chi_\star \|_{\rm tv}=\frac12\max_{(r,h)\in\mathcal{X}}\sum_{(a,b)\in \mathcal{X}}\left|\chi^{r,h}_t(a,b)-\chi_\star(a,b) \right|\,,
\end{equation*}
\end{color}
\begin{color}{black}
and define
\begin{equation}
	t_{\rm mix}\coloneqq\inf\left\{t\ge 0\mid \widetilde{\mathcal{D}}^{N,{\rm tv}}_{\alpha, m}(t)\le \frac14 \right\}\,,
\end{equation}
where the dependence of $t_{\rm mix}$ on the parameters of the model is suppressed to ease the reading.
\end{color}
The next result is not hard to derive, and we postpone its proof to Section \ref{sec:proof-prop}.
\begin{proposition}\label{prop:whole-chain}
	For all $N\in\mathbb{N}$ fix $\alpha=\alpha_N\in(0,1\textcolor{black}{)}$, \textcolor{black}{$m=m_N\in\{1,\dots,N-1\}$, and recall the definition of $\beta=\beta_N$ in \eqref{eq:def-beta}}. 
	\begin{color}{black}
		Consider the sequence
		\begin{equation}\label{eq:def-gamma}
			\tilde\gamma=\tilde\gamma_N= \beta_N-\alpha_N\ ,
		\end{equation}
		and assume that
		$$\exists\: \tilde\gamma_\infty=\lim_{N\to\infty}\tilde\gamma_N\in[-1,+1]\ .$$
	\end{color}
	Depending of the asymptotic behavior of the parameters $\alpha=\alpha_N$ and $\beta=\beta_N$ three different mixing behaviors can take place: 
		\begin{itemize}
		\item \emph{Insensitivity:} if \,\textcolor{black}{ $\tilde{\gamma}_\infty<0$} there exists $C=C(\varepsilon)>0$ such that, called
		\begin{equation}\label{eq:def-t-R}
			t^{R}=\frac12\log(N)\,,
		\end{equation}
\textcolor{black}{we have}
		\begin{equation}\label{eq:prop-insensitivity}
			\liminf_{N\to\infty}	\widetilde{\mathcal{D}}_{\alpha,\textcolor{black}{m}}^{N,\rm tv}(t^{R}-C)\ge 1-\varepsilon\ ,\qquad \limsup_{N\to\infty}	\widetilde{\mathcal{D}}_{\alpha,\textcolor{black}{m}}^{N,\rm tv}(t^{R}+C)\le \varepsilon  \ .
		\end{equation}
		\item \emph{Delayed cutoff:}  if \,\textcolor{black}{ $\tilde{\gamma}_\infty\ge0$} and \textcolor{black}{$m\to\infty$}
		then, for every $\varepsilon>0$ there exists $C=C(\varepsilon)>0$ such that, called 
		\begin{equation}
			t^H=\frac\beta{2\alpha}\log(N)\,,
		\end{equation}
\textcolor{black}{we have}
		\begin{equation}\label{eq:prop-delayed}
			\liminf_{N\to\infty}	\widetilde{\mathcal{D}}_{\alpha,\textcolor{black}{m}}^{N,\rm tv}(t^{H}-C\alpha^{-1})\ge 1-\varepsilon\ ,\qquad	\limsup_{N\to\infty}	\widetilde{\mathcal{D}}_{\alpha,\textcolor{black}{m}}^{N,\rm tv}(t^{H}+C\alpha^{-1})\le \varepsilon  \ .
		\end{equation}
		\item \emph{No cutoff:} if \,\textcolor{black}{ $\tilde{\gamma}_\infty\ge0$} and \textcolor{black}{$\sup_Nm_N<\infty$},
		then for every constant $C>0$ sufficiently large there exists $0<\varepsilon_1(C)<\varepsilon_2(C)<1$ such that
		\begin{equation}\label{eq:prop-no-cutoff}
			\liminf_{N\to\infty}	\widetilde{\mathcal{D}}_{\alpha,\textcolor{black}{m}}^{N,\rm tv}(C\alpha^{-1})\ge \varepsilon_1(C),\qquad \limsup_{N\to\infty}	\widetilde{\mathcal{D}}_{\alpha,\textcolor{black}{m}}^{N,\rm tv}(C\alpha^{-1})\le \varepsilon_2(C)\ .
		\end{equation}
	\end{itemize}
\end{proposition}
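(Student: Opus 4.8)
The plan is to exploit the product structure of the chain. Since a regular ball's Poisson clock only ever moves regular balls, and likewise for heavy balls, the two coordinates $(R_t)_{t\ge 0}$ and $(H_t)_{t\ge 0}$ evolve as \emph{independent} Markov chains: $R$ is a continuous-time Ehrenfest urn on $n$ balls, each resampling its urn at rate $1$, and $H$ is the same with $m$ balls at rate $\alpha$. Hence $\chi_t^{r,h}=\mu_t^{r}\otimes\nu_t^{h}$ and $\chi_\star=\mu_\star\otimes\nu_\star$ with $\mu,\nu$ the one-coordinate laws, and the elementary bounds for total variation of product measures give
\[
\max\bigl(D^{R}(t),D^{H}(t)\bigr)\ \le\ \widetilde{\mathcal D}^{N,\rm tv}_{\alpha,m}(t)\ \le\ D^{R}(t)+D^{H}(t),
\]
where $D^{R}(t)$, $D^{H}(t)$ denote the worst-case TV distances to equilibrium of the two urns; by symmetry and monotonicity, the worst case in each coordinate is the start with all balls of that type on one side.

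The second step is a single-urn estimate. For $k$ balls each resampling independently at rate $r$, started all on the left, the left-urn count at time $t$ is $\mathrm{Bin}\bigl(k,\frac12+\frac12 e^{-rt}\bigr)$, to be compared with the stationary $\mathrm{Bin}(k,\frac12)$. The upper bound follows by the mechanism of Section~\ref{sec:tv-ub}, namely tensorising relative entropy over the $k$ (independent, hence negatively dependent) balls and invoking Pinsker, which gives a distance at most a constant times $\sqrt k\, e^{-rt}$; the matching lower bound comes from testing the left-urn count, whose mean is displaced by $\frac k2 e^{-rt}$ from the stationary value while both laws have standard deviation of order $\sqrt k$. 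Consequently the profile drops from $1-\varepsilon$ to $\varepsilon$ across a window of width $O(r^{-1})$ around $\frac1{2r}\log k$; this is a genuine cutoff exactly when $k\to\infty$, whereas for $k=O(1)$ it merely decays smoothly on the scale $r^{-1}$ with no cutoff.

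One now inserts $(k,r)=(n,1)$ and $(k,r)=(m,\alpha)$ and reads off the three regimes; the relevant quantitative comparison is that $t^{H}\ge t^{R}$ is equivalent to $\beta\ge\alpha$, i.e. to $\tilde\gamma\ge0$, which decides which coordinate equilibrates last. \emph{Insensitivity}: $\tilde\gamma_\infty<0$ forces (using $m\ge1$ and $\alpha<1$) both $\alpha$ bounded away from $0$ and $\beta$ bounded away from $1$, hence $n\sim N$, $\frac12\log n=t^{R}+o(1)$, and $t^{H}\le(1-c)\,t^{R}$ for some fixed $c>0$; thus the heavy urn equilibrates well before $t^{R}$ and $\widetilde{\mathcal D}$ inherits the cutoff of $R$ at $t^{R}$, which is \eqref{eq:prop-insensitivity}. \emph{Delayed cutoff}: when $\tilde\gamma_\infty\ge0$ and $m\to\infty$, the heavy urn has a cutoff at $t^{H}$ with window $\alpha^{-1}$; the lower bound in \eqref{eq:prop-delayed} uses only the heavy coordinate, and for the upper bound one further checks that $R$ has equilibrated by $t^{H}+C\alpha^{-1}$, which holds because $t^{H}$ is (essentially) at least $t^{R}\ge\frac12\log n$. \emph{No cutoff}: when $\tilde\gamma_\infty\ge0$ and $\sup_N m_N<\infty$ one has $\alpha\to0$ and $t^{H}=\Theta(\alpha^{-1})$; since $m$ stays bounded, $D^{H}(C\alpha^{-1})$ equals a fixed function of $C$ and $m$ lying strictly between two constants $\varepsilon_1(C)<\varepsilon_2(C)$, while $R$ is negligible at that time, which gives \eqref{eq:prop-no-cutoff}.

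The step I expect to require the most care is the comparison of the two timescales on the boundary $\tilde\gamma_\infty=0$: there $t^{H}$ and $t^{R}$ are of the same order but their difference, of size $\frac{|\tilde\gamma_N|}{2\alpha}\log N$, need not be negligible against the relevant mixing windows, so one must verify, using the two-sided single-urn estimates together with the hypotheses of each case, that the coordinate which is \emph{not} setting the cutoff location has indeed reached equilibrium by the time the other does. Away from that boundary the two scales separate by a positive power of $\log N$, and the conclusion is immediate.
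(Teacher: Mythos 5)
Your decomposition is the same as the paper's: the two coordinates evolve independently, the lower bounds come by projecting onto the coordinate that equilibrates last (classical one-urn cutoff for $R$ or $H$ when the relevant number of balls diverges, smooth relaxation on scale $\alpha^{-1}$ when $m$ stays bounded), and the upper bounds come from summing one-coordinate distances. The paper implements the upper bound through the product-chain $\ell^2$ estimate of \cite[Theorem 20.7]{LP17}, i.e.\ \eqref{eq:LP}, which is of the same strength as your entropy-tensorisation-plus-Pinsker bound of order $\sqrt{k}\,e^{-rt}$ per urn; both are uniform in the initial state, so your ``worst case is the extreme configuration'' claim is not even needed for the upper bound (and for the lower bound you may simply choose the extreme start). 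So the overall route matches the paper's proof.

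There is, however, one step that does not hold as you wrote it: the upper bound in the \emph{no cutoff} regime. You justify ``$R$ is negligible at time $C\alpha^{-1}$'' only from $\alpha\to0$, but the regular coordinate needs time $\tfrac12\log N+O(1)$ to equilibrate, so what is required is $C\alpha^{-1}\ge \tfrac12\log N+{\rm const}$, i.e.\ the quantitative bound $\alpha\lesssim(\log N)^{-1}$ with $C$ large compared to the implied constant. This is precisely what the paper invokes at this point, and it is available because in this regime one effectively has $\alpha\le\beta\le \log(\sup_N m_N)/\log N$. The distinction is not cosmetic: if one only knows $\alpha\to0$ (say $\alpha_N=1/\log\log N$ with $m_N=2$), then at time $C\alpha^{-1}\ll\log N$ the regular urn is still at distance close to $1$ and the claimed bound $\varepsilon_2(C)<1$ in \eqref{eq:prop-no-cutoff} would fail, so you must record and use the quantitative relation between $\alpha$ and $\log N$. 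Two minor further remarks: $t^H=\Theta(\alpha^{-1})$ is false for $m=1$ (harmless, since your argument only uses $D^H(C\alpha^{-1})$); and the boundary subtlety you flag at $\tilde\gamma_\infty=0$ — checking that the coordinate not dictating the cutoff location has already mixed — is real, but the paper's own proof treats it no more explicitly than you do, in effect reading the hypotheses as $\beta\ge\alpha$, so you are not at a disadvantage there.
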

\textcolor{black}{As a first observation, notice that, for a single class of balls, namely when considering the classical Ehrenfest urn, \textcolor{black}{it is} well-known \textcolor{black}{that the process exhibits} cutoff  at $\frac{t_{\rm rel}}2 \log(N)+\Theta(t_{\rm rel})$, see, e.g., \cite{D96}, when the number of balls goes to infinity.}
Therefore, Proposition \ref{prop:whole-chain} can be summarized by saying that the process exhibits cutoff at a time that is the largest among the mixing times of its two coordinates, and to disrupt this phenomenology it is necessary and sufficient to require that $\textcolor{black}{m}\asymp1$ (so that the ``heavy'' coordinate cannot exhibit cutoff) and that at the same time \textcolor{black}{$\beta\ge \alpha$.}

It is worth \textcolor{black}{pointing out} that, given that $(R_t,H_t)$ is a product chain, its relaxation time is $t_{\rm rel}=\alpha^{-1}$. \textcolor{black}{In light of this, it is not difficult to deduce the following result from Proposition \ref{prop:whole-chain}.
	\begin{corollary}\label{coro}
		In the setup of Proposition \ref{prop:whole-chain} there is cutoff if and only if the \emph{product condition}
		\begin{equation}\label{eq:prod-cond}
		\lim_{N\to\infty}	\frac{t_{\rm mix}}{t_{\rm rel}}=\infty\,,
		\end{equation}
	holds.
	\end{corollary}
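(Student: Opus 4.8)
The plan is to obtain the corollary directly from the trichotomy of Proposition~\ref{prop:whole-chain}, using two observations. First, since $(R_t,H_t)$ is the product of an Ehrenfest urn on the $n_N$ regular balls (spectral gap $1$) and one on the $m_N$ heavy balls (spectral gap $\alpha$), its relaxation time is $t_{\rm rel}=\alpha^{-1}$. Second, Proposition~\ref{prop:whole-chain} already records whether cutoff occurs in each of its three regimes. Hence it suffices to check that the product condition \eqref{eq:prod-cond} holds in the \emph{insensitivity} and \emph{delayed cutoff} regimes and fails in the \emph{no cutoff} regime: granting this, a chain exhibiting cutoff cannot be in the no-cutoff regime, hence lies in one of the first two, where \eqref{eq:prod-cond} holds; and conversely a chain satisfying \eqref{eq:prod-cond} cannot lie in the no-cutoff regime, hence lies in one of the first two, where Proposition~\ref{prop:whole-chain} gives cutoff.

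First I would handle the two regimes in which cutoff occurs, where only elementary estimates enter. In the insensitivity regime $\tilde\gamma_\infty<0$, so $\tilde\gamma_N<\tilde\gamma_\infty/2<0$ for all large $N$; since $\beta_N\ge0$ this yields $\alpha_N=\beta_N-\tilde\gamma_N\ge-\tilde\gamma_\infty/2>0$, whence $t_{\rm rel}=\alpha_N^{-1}$ is bounded, while \eqref{eq:prop-insensitivity} gives $t_{\rm mix}=\tfrac12\log N+O(1)$, so $t_{\rm mix}/t_{\rm rel}\to\infty$. In the delayed cutoff regime, \eqref{eq:prop-delayed} gives $t_{\rm mix}=t^H+O(\alpha^{-1})$ with $t^H=\tfrac{\beta}{2\alpha}\log N=\tfrac{\log m_N}{2\alpha}$, so that $t_{\rm mix}/t_{\rm rel}=\alpha\,t_{\rm mix}=\tfrac12\log m_N+O(1)$, which tends to infinity exactly because $m_N\to\infty$. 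Thus \eqref{eq:prod-cond} holds in both regimes.

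Next I would treat the no-cutoff regime, where the target is the reverse estimate $t_{\rm mix}=\Theta(\alpha^{-1})=\Theta(t_{\rm rel})$, so that $t_{\rm mix}/t_{\rm rel}$ remains bounded and \eqref{eq:prod-cond} fails. The lower bound $t_{\rm mix}\ge c\,t_{\rm rel}$ for a universal $c>0$ is the standard relaxation-time bound for reversible chains. For the matching upper bound I would bound the worst-case total variation distance of $(R_t,H_t)$ by the sum of those of its two coordinates: the heavy coordinate, a product of $m_N\le\sup_Nm_N<\infty$ two-state chains, has distance at most $\tfrac{m_N}{2}e^{-\alpha t}$, while the regular coordinate, an Ehrenfest urn on $n_N\to\infty$ balls, exhibits cutoff at $\tfrac12\log n_N$ with an $O(1)$ window, uniformly in $N$. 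Since the heavy coordinate is the asymptotically slower of the two in this regime --- which is the very meaning of $\tilde\gamma_\infty\ge0$ together with $m_N$ bounded --- evaluating at $t=C\alpha^{-1}$ for $C$ large makes both contributions smaller than $1/8$ for all large $N$, so $\widetilde{\mathcal{D}}_{\alpha,m}^{N,\rm tv}(C\alpha^{-1})<1/4$ and $t_{\rm mix}\le C\alpha^{-1}$; this is the same computation underlying \eqref{eq:prop-no-cutoff}, now run with a large constant.

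Putting the three cases together completes the proof. The step I expect to be the crux is this last upper bound, $t_{\rm mix}=O(\alpha^{-1})$ in the no-cutoff regime: one has to ensure that the regular coordinate, whose mixing time $\tfrac12\log n_N$ does not see $\alpha$, cannot outrun the heavy one, and this is exactly where the condition $\tilde\gamma_\infty\ge0$ (equivalently, $\beta_N\ge\alpha_N-o(1)$) is essential. Everything else is routine bookkeeping on top of the bounds already supplied by Proposition~\ref{prop:whole-chain}.
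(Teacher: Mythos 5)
Your reduction and your treatment of the two cutoff regimes are fine and essentially parallel to the paper's (insensitivity: $\tilde\gamma_\infty<0$ forces $\alpha_N$ bounded away from $0$, so $t_{\rm mix}/t_{\rm rel}\asymp\log N$; delayed cutoff: $\alpha\, t^H=\tfrac12\log m_N\to\infty$). The genuine gap sits exactly at the step you single out as the crux. You claim that $\tilde\gamma_\infty\ge0$ together with $\sup_N m_N<\infty$ is "the very meaning" of the heavy coordinate being the slower one, i.e.\ that $\alpha_N\lesssim(\log N)^{-1}$, whence $t_{\rm mix}=O(\alpha^{-1})$ and failure of \eqref{eq:prod-cond} throughout the no-cutoff regime. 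But those hypotheses only give $\beta_N=O(1/\log N)$ and $\tilde\gamma_N\to0$, hence $\alpha_N=\beta_N-\tilde\gamma_N\to0$; the $o(1)$ coming from $\tilde\gamma_N$ may be far larger than $(\log N)^{-1}$. Take $m_N=2$ and $\alpha_N=(\log N)^{-1/2}$: then $\tilde\gamma_\infty=0$ and $m_N$ is bounded, yet $C\alpha^{-1}=C\sqrt{\log N}\ll\tfrac12\log N$, the regular coordinate is nowhere near equilibrium at time $C\alpha^{-1}$, $t_{\rm mix}\asymp\log N$ and $t_{\rm mix}/t_{\rm rel}\asymp\sqrt{\log N}\to\infty$. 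So the product condition does \emph{not} fail on all of the regime you treat (and in fact the chain there exhibits cutoff at $\tfrac12\log N$), which breaks the "cutoff $\Rightarrow$ \eqref{eq:prod-cond}" half of your argument as written. A second, smaller, logical issue: the three regimes of Proposition \ref{prop:whole-chain} are not exhaustive ($m_N$ may be neither bounded nor divergent), so "not in the no-cutoff regime, hence in one of the first two" is not a valid deduction.

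The paper's proof is organised precisely so as to avoid this trap: it does not argue regime by regime from the trichotomy, but first establishes the parameter-free two-sided bound $t_{\rm mix}\asymp\log N+t^H$ (the mixing time of a product chain is between the maximum and the sum of the coordinates' mixing times), so that $t_{\rm mix}/t_{\rm rel}\asymp(\alpha_N+\beta_N)\log N$. Failure of \eqref{eq:prod-cond}, i.e.\ $\liminf t_{\rm mix}/t_{\rm rel}<\infty$, is then equivalent to $\alpha_N+\beta_N\lesssim(\log N)^{-1}$ along subsequences — a condition strictly stronger than membership in the no-cutoff regime — and only under that condition is the no-cutoff behaviour invoked; in the two cutoff regimes the same formula yields \eqref{eq:prod-cond} directly. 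To repair your argument you would have to replace the qualitative criterion "heavy is slower because $\tilde\gamma_\infty\ge0$ and $m_N$ is bounded" by the quantitative dichotomy in terms of $\alpha_N\log N+\log m_N$, at which point you essentially recover the paper's proof.
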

} 
In fact, in the framework of the mixing time of Markov chains, it is known that the the \emph{product condition} \textcolor{black}{in \eqref{eq:prod-cond}} is necessary for cutoff \cite[Prop. 18.4]{LP17}, and it has been shown to be sufficient in specific setups, such as birth-and-death chains \cite{DLP10}, random walks on trees \cite{BHP17} and the reversible simple exclusion process with reservoirs \cite{S22}. 

As mentioned in the Introduction, in this note we aim at understanding, rather than the mixing behavior of the chain, the mixing time of a specific statistic.  
More precisely, we interpret the \emph{heavy balls} as \emph{impurities}, and we assume that the observer is not aware of their existence. In other words, the observer can only count the number of balls in the two containers, but cannot distinguish between heavy and regular balls. \textcolor{black}{In absence of impurities, by looking at such a statistic for the process at a time slightly larger than $t^R$ for several independent experiments, the observer should see configurations ``statistically indistinguishable from  equilibrium''.
The question we are going to address is: how long does it take for the observer to see configurations ``statistically indistinguishable from  equilibrium'' in presence of impurities?} 
\textcolor{black}{More precisely, for every initial configuration $(r,h)\in\mathcal{X}$ and $t\ge 0$ we consider the quantity $W_t\coloneqq R_t+H_t$ and call $\nu_t^{r,h}\coloneqq{\rm Law}(W_t)$.}
We aim at investigating how this convergence takes place as a function of the two parameters $(\alpha,\textcolor{black}{m})$ in the limit in which the number of balls goes to infinity. To this aim, for every fixed $N$ and choice of the parameters $\alpha=\alpha_N$ and $\textcolor{black}{m}=\textcolor{black}{m_N}$\textcolor{black}{,} we will consider the function
$$t\mapsto \mathcal{D}_{\alpha,\textcolor{black}{m}}^{N,{\rm tv}}(t)\coloneqq \max_{(r,h)\in \mathcal{X}}\left\|\nu^{r,h}_t-\nu_\star\right\|_{\rm tv}=\frac12 \max_{(r,h)\in \mathcal{X}}\sum_{k=0}^{N}|\nu^{r,h}_t(k)-\nu_\star(k)| \ ,$$
where $\nu_\star$ is the law of a Binomial random variable of parameters $N$ and $\tfrac12$.
\begin{theorem}\label{th:tv}
	Consider the sequence
	\begin{equation}\label{eq:def-gamma}
		\gamma=\gamma_N= \frac{2\beta-1}{\alpha}-1\ ,
	\end{equation}
	and assume that
	$$\exists\: \gamma_\infty=\lim_{N\to\infty}\gamma_N\in[-\infty,+\infty]\ .$$
	Depending of the asymptotic behavior of the parameters $\alpha=\alpha_N$ and $\beta=\beta_N$ three different mixing behaviors can take place: 
	\begin{itemize}
		\item \emph{Insensitivity:} if $\gamma_\infty<0$  then, for every $\varepsilon>0$ there exists $C=C(\varepsilon)>0$ such that,
		\begin{equation}
			\liminf_{N\to\infty}\mathcal{D}_{\alpha,\textcolor{black}{m}}^{N,{\rm tv}}(t^{R}-C)\ge 1-\varepsilon\ ,\qquad \limsup_{N\to\infty}\mathcal{D}_{\alpha,\textcolor{black}{m}}^{N,{\rm tv}}(t^{R}+C)\le \varepsilon  \ ,
		\end{equation}
	where $t^R$ is as in \eqref{eq:def-t-R}.
		\item \emph{Delayed cutoff:} if $\gamma_\infty\ge 0$ 
		and 
		$(2\beta-1)\log(N)\to\infty$, 
		then, for every $\varepsilon>0$ there exists $C=C(\varepsilon)>0$ such that, called
		$$t^{\rm dc}=\frac{1+\gamma}2 \log(N) \ ,$$
		we have
		\begin{equation}
			\liminf_{N\to\infty}\mathcal{D}_{\alpha,\textcolor{black}{m}}^{N,{\rm tv}}(t^{\rm dc}-C\alpha^{-1})\ge 1-\varepsilon\ ,\qquad	\limsup_{N\to\infty}\mathcal{D}_{\alpha,\textcolor{black}{m}}^{N,{\rm tv}}(t^{\rm dc}+C\alpha^{-1})\le \varepsilon  \ .
		\end{equation}
		\item \emph{No cutoff:} if $\gamma_\infty\ge 0$  and 
		$(2\beta-1)\log(N)\to\ell\in[0,\infty)$,
		then for every constant $C=C(\ell)>0$ sufficiently large there exists $0<\varepsilon_1(C)<\varepsilon_2(C)<1$ such that
		\begin{equation}\label{eq:th-no-cutoff}
			\liminf_{N\to\infty}\mathcal{D}_{\alpha,\textcolor{black}{m}}^{N,{\rm tv}}(C\alpha^{-1})\ge \varepsilon_1(C),\qquad \limsup_{N\to\infty}\mathcal{D}_{\alpha,\textcolor{black}{m}}^{N,{\rm tv}}(C\alpha^{-1})\le \varepsilon_2(C)\ .
		\end{equation}
	\end{itemize}
\end{theorem}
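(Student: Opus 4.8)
\emph{Strategy.} The plan is to reduce the whole statement to the asymptotics, relative to $\sqrt N$, of the single deterministic quantity
$$\psi_N(t)\ :=\ n\,e^{-t}+m\,e^{-\alpha t}\,,$$
showing that $\mathcal D^{N,{\rm tv}}_{\alpha,m}(t)\to 0$ when $\psi_N(t)=o(\sqrt N)$, that $\mathcal D^{N,{\rm tv}}_{\alpha,m}(t)\to 1$ when $\psi_N(t)/\sqrt N\to\infty$, and that $\mathcal D^{N,{\rm tv}}_{\alpha,m}(t)$ stays bounded away from $0$ and $1$ when $\psi_N(t)\asymp\sqrt N$. Since the $N$ balls move independently, conditionally on the initial configuration $(r,h)$ the observable $W_t=R_t+H_t$ is a sum of $N$ independent Bernoulli variables: a regular ball initially in the left (resp. right) urn is in the left urn at time $t$ with probability $\tfrac12(1+e^{-t})$ (resp. $\tfrac12(1-e^{-t})$), and likewise with $e^{-\alpha t}$ in place of $e^{-t}$ for a heavy ball. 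Let $\mathcal U$ be the (random) set of balls not yet selected by time $t$: each regular ball lies in $\mathcal U$ independently with probability $e^{-t}$, each heavy ball with probability $e^{-\alpha t}$, so $\mathbb E|\mathcal U|=\psi_N(t)$. Since a ball not in $\mathcal U$ sits in a uniformly random urn independently of everything, one has, conditionally on $\mathcal U$, the distributional identities
$$W_t\ \overset{d}{=}\ \ell(\mathcal U)+\mathrm{Bin}\bigl(N-|\mathcal U|,\tfrac12\bigr)\,,\qquad \nu_\star\ \overset{d}{=}\ \mathrm{Bin}\bigl(|\mathcal U|,\tfrac12\bigr)+\mathrm{Bin}\bigl(N-|\mathcal U|,\tfrac12\bigr)\,,$$
where $\ell(\mathcal U)$ is the number of elements of $\mathcal U$ initially in the left urn (deterministic given $\mathcal U$ and $(r,h)$), and the two $\mathrm{Bin}(N-|\mathcal U|,\tfrac12)$ factors may be taken equal. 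This is the step where the independence of the spins --- a degenerate instance of the negative dependence exploited by Salez \cite{S22} --- makes the analysis elementary.

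\emph{Upper bound.} Conditioning on $\mathcal U$, using convexity of the total variation distance (twice), together with the elementary smoothing estimate
$$\bigl\|\mathrm{Bin}(k,\tfrac12)+a-\mathrm{Bin}(k,\tfrac12)-b\bigr\|_{\rm tv}\ \le\ \frac{C\,|a-b|}{\sqrt k}\,,$$
which follows from unimodality of the binomial coefficients and Stirling's formula, one gets
$$\bigl\|\nu^{r,h}_t-\nu_\star\bigr\|_{\rm tv}\ \le\ C\,\mathbb E\!\left[\frac{\mathbb E\bigl[\,\bigl|\mathrm{Bin}(|\mathcal U|,\tfrac12)-\ell(\mathcal U)\bigr|\ \big|\ \mathcal U\,\bigr]}{\sqrt{N-|\mathcal U|}}\right].$$
Bounding the inner expectation by $\bigl|\,|\mathcal U|/2-\ell(\mathcal U)\,\bigr|+\tfrac12\sqrt{|\mathcal U|}$, noting that $|\mathcal U|/2-\ell(\mathcal U)$ is half a signed sum of $|\mathcal U|$ independent terms whose mean is at most $\psi_N(t)$ in absolute value, and that $N-|\mathcal U|\ge N/2$ with probability $1-e^{-cN}$ as soon as $\psi_N(t)\le N/4$, one arrives at
$$\mathcal D^{N,{\rm tv}}_{\alpha,m}(t)\ \le\ C\,\frac{\psi_N(t)+\sqrt{\psi_N(t)}}{\sqrt N}+e^{-cN}\qquad\text{whenever }\psi_N(t)\le \tfrac N4\,.$$
In particular $\mathcal D^{N,{\rm tv}}_{\alpha,m}(t)\le\varepsilon$ whenever $\psi_N(t)\le c(\varepsilon)\sqrt N$.

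\emph{Lower bound.} For the matching lower bounds I would use the single test set $A_t=\{k\le N/2+\psi_N(t)/4\}$ and the worst-case start $r=n$, $h=m$, for which $\mathbb E[W_t]=N/2+\tfrac12\psi_N(t)$ and $\mathrm{Var}(W_t)\le N/4$. Then $\nu_\star(A_t)\ge 1-e^{-\psi_N(t)^2/(8N)}$ by Hoeffding, while $\nu^{n,m}_t(A_t)=\mathbb P\bigl(W_t\le\mathbb E W_t-\tfrac14\psi_N(t)\bigr)\le 4N/\psi_N(t)^2$ by Chebyshev. This already yields $\mathcal D^{N,{\rm tv}}_{\alpha,m}(t)\ge 1-\varepsilon$ whenever $\psi_N(t)\ge K(\varepsilon)\sqrt N$. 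When only $\liminf_N\psi_N(t)/\sqrt N>0$ one refines the bound on $\nu^{n,m}_t(A_t)$ by conditioning on $\mathcal U$ (which concentrates around $\psi_N(t)$ on scale $o(\sqrt N)$) and applying a Berry--Esseen estimate to $\mathrm{Bin}(N-|\mathcal U|,\tfrac12)$; this shows that $\mathcal D^{N,{\rm tv}}_{\alpha,m}(t)$ stays bounded away from $0$ (and, by the upper bound, from $1$) in that regime.

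\emph{Conclusion and main obstacle.} It remains to evaluate $\psi_N$ at the relevant time scales. Since $\gamma_\infty<0$ forces $\beta$ to stay bounded away from $1$, hence $n=N(1-o(1))$ and $m\,e^{-\alpha t^R}=N^{\beta-\alpha/2+o(1)}=o(\sqrt N)$, in the \emph{insensitivity} regime $\psi_N(t^R\pm C)=\sqrt N\,e^{\mp C}(1+o(1))$, giving a cutoff at $t^R$ with an $O(1)$ window. In the \emph{delayed cutoff} regime $m\,e^{-\alpha t^{\rm dc}}=\sqrt N$ by the very definition of $t^{\rm dc}$, whereas $\gamma_\infty\ge0$ forces $n\,e^{-t^{\rm dc}}=O(\sqrt N)$; replacing $t^{\rm dc}$ by $t^{\rm dc}\pm C\alpha^{-1}$ multiplies the heavy term by $e^{\mp C}$ (and only improves the lower bound on the $-$ side), while the hypothesis $(2\beta-1)\log N\to\infty$ is exactly $\alpha t^{\rm dc}\to\infty$, i.e. $t^{\rm dc}\gg\alpha^{-1}$, so the $\alpha^{-1}$-window is of lower order than the mixing time: a cutoff. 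Finally, when $(2\beta-1)\log N\to\ell<\infty$ the condition $\gamma_\infty\ge0$ forces $\alpha\to0$; hence at $t=C\alpha^{-1}$ the regular term is $n\,e^{-C/\alpha}\le N^{1-C/\ell+o(1)}=o(1)$ while the heavy term is $m\,e^{-C}=\sqrt N\,e^{\ell/2-C}(1+o(1))$, so $\psi_N(C\alpha^{-1})/\sqrt N\to e^{\ell/2-C}\in(0,1)$ for $C$ large, and the upper and lower bounds pin $\mathcal D^{N,{\rm tv}}_{\alpha,m}(C\alpha^{-1})$ to a value in $(0,1)$ depending on $C$ --- no cutoff. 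The genuinely delicate point, where I expect the most care to be needed, is precisely the lower bound in the \emph{no cutoff} regime: there $\psi_N(t)$ is only of order $\sqrt N$, so the crude Chebyshev bound does not separate $\nu^{n,m}_t$ from $\nu_\star$, and one must run the local/Berry--Esseen estimate above and verify that $|\mathcal U|$ fluctuates on a scale $o(\sqrt N)$; everything else is routine.
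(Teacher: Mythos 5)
Your proposal is correct in substance and arrives at the three regimes through the same underlying dichotomy as the paper (the size of $\psi_N(t)=n e^{-t}+m e^{-\alpha t}$ relative to $\sqrt N$), but your upper bound travels a genuinely different road. The paper controls the law of the \emph{whole configuration}: it symmetrizes the unrefreshed indicators by a uniform permutation, proves negative dependence through the hypergeometric--binomial factorial-moment comparison (Lemma \ref{lemma:neg-dep}), invokes Salez's $L^2$ bound (Lemma \ref{lemma:S22}), and then projects onto the sum. You instead work directly with the observable: conditioning on the unrefreshed set $\mathcal U$, the law of $W_t$ is a deterministic shift of $\mathrm{Bin}(N-|\mathcal U|,\tfrac12)$, and the shift estimate $\|{\rm Bin}(k,\tfrac12)+a-{\rm Bin}(k,\tfrac12)-b\|_{\rm tv}\le C|a-b|/\sqrt k$ (unimodality plus $\max_j \binom{k}{j}2^{-k}=O(k^{-1/2})$) yields $\mathcal D^{N,{\rm tv}}_{\alpha,m}(t)\le C(\psi_N(t)+\sqrt{\psi_N(t)})/\sqrt N+e^{-cN}$; this is elementary, uses only independence of the refresh indicators (no negative dependence at all), and loses only a multiplicative constant, which is harmless since the time windows carry factors $e^{\mp C}$. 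What it gives up is the stronger conclusion of the paper (a bound on the distance of the full spin configuration, of independent interest) and the generality of the negative-dependence method. Your lower bound is the paper's: half-line distinguishing statistics, Chebyshev/Hoeffding in the two cutoff regimes and a CLT refinement in the no-cutoff regime; the one point where you anticipate delicacy (conditioning on $\mathcal U$, a Berry--Esseen estimate, and control of the fluctuations of $|\mathcal U|$) is actually unnecessary, because with a deterministic worst-case start $W_t$ is already a sum of independent Bernoulli variables with variance $\sim N/4$, so the CLT applies to it directly, exactly as in Section \ref{suse:CLT}. One caveat on the final parameter check: your claim that $\gamma_\infty<0$ forces $m e^{-\alpha t^R}=o(\sqrt N)$ really uses $(\alpha-(2\beta-1))\log N\to\infty$ (it can fail when, e.g., $\alpha\asymp 1/\log N$ and $m\asymp\sqrt N$, which is compatible with $\gamma_\infty<0$), and similarly $\gamma_\infty\ge0$ alone does not give $n e^{-t^{\rm dc}}=O(\sqrt N)$ when $\gamma_N\to 0^-$; the paper's displayed inequalities in Section \ref{sec:proofs} rest on the same implicit strengthening, so I do not count this as a gap relative to the published argument, but it is the one step of the reduction where the hypotheses deserve an explicit check.
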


Let us now briefly comment on the result. We might define the phase $\gamma_\infty<0$ as \emph{subcritical}: the strength of the perturbation is not enough to affect the mixing behavior of the process. Notice that in this case, letting $\alpha_N\to 0$ arbitrarily fast, essentially constraining the heavy particles on the \textcolor{black}{right} urn for an arbitrarily long time, Theorem \ref{th:tv} states that at least order $\sqrt{N}$ \emph{impurities} are necessary to affect the mixing mechanism. This is not at all surprising. Indeed, the positions of $o(\sqrt{N})$ balls is irrelevant due to the size of the fluctuations of the equilibrium measure. 

At the \emph{critical line} $\gamma_\infty=0$ a smooth transition takes place: indeed, one should notice that, if $\alpha=\Theta(1)$ and $\gamma^{-1}\gg\log N$, then we get again the same phenomenology as in the \emph{subcritical phase}. Nevertheless, as $\gamma$ grows, the cutoff time is delayed by the amount $\tfrac{\gamma}{2}\log N$ and, if $\alpha\to 0$, the cutoff window is enlarged by a factor $\alpha^{-1}$. It is enlightening to look at such a behavior from the following perspective: let $t_{\rm rel}=\alpha^{-1}$ be the relaxation time of the (whole) Markov process, so that it is natural to measure time on the timescale of $t_{\rm rel}$. Doing so, if $\gamma_\infty\ge 0$, Theorem \ref{th:tv} tells us that
\begin{equation}
	t_{\rm mix}^W(\varepsilon):=\inf\{t\ge 0\mid \mathcal{D}_{\alpha,\textcolor{black}{m}}^{N,{\rm tv}}(t)\le \varepsilon \}=t_{\rm rel}\big[(\beta-\tfrac{1}{2})\log(N)+ \Theta(1) \big]\,,\qquad \varepsilon\in(0,1) \ .
\end{equation}	
The latter equation makes clear the transition between the \emph{delayed cutoff phase} to the one in which the cutoff phenomenon is disrupted, as the finiteness of the limit of $(\beta-\tfrac{1}{2})\log N$ as $N\to\infty$ shows. In other words, a sort of \emph{product condition} is necessary and sufficient for the presence of cutoff.

\begin{remark}
It is worth \textcolor{black}{making} an explicit comparison between the regimes in Proposition \ref{prop:whole-chain} and those in Theorem \ref{th:tv}.
If \textcolor{black}{$\tilde\gamma_{\infty}<0$} then $\gamma_\infty<0$, so that not only the statistics of interest, but actually the whole chain, mixes at time $t^R$. On the other hand, if $\gamma_\infty\ge0$ then \textcolor{black}{$\tilde\gamma_{\infty}\ge0$ and $m\to\infty$, therefore in this case we have that the whole chain exhibits cutoff at $t^H$ while our statistic either mixes without cutoff on the timescale $\alpha^{-1}\ll t^H$, if $(2\beta-1)\log N\not\to\infty$, or it has cutoff at time $t^{\rm dc}\le t^H$ where the difference grows to infinity if $\lim_{N\to\infty}2\beta-1>0$.}
The remaining case is the one in which \textcolor{black}{$\tilde{\gamma}_\infty\ge0$ but $\gamma_{\infty}<0$, i.e., }$\alpha\le\beta<\frac{1+\alpha}{2}$. \textcolor{black}{In this case, if $m\to\infty$ then both the whole chain and our statistic exhibit cutoff at times $t^H$ and $t^R$, respectively, where the former is always larger than the latter and their ratio goes to infinity if $\beta\gg\alpha$ (in particular, we must have $\alpha\ll 1$). Finally, if $\tilde{\gamma}_\infty\ge0$, $\gamma_{\infty}<0$ but $m\asymp 1$, then the whole chain and our statistic reach equilibrium on scale $\alpha^{-1}$ (smoothly) and $t^{R}$ (with cutoff), respectively, and the ratio between the two scales goes to infinity if and only if $\alpha\ll (\log N)^{-1}$.}
\end{remark}

\begin{color}{black}
	\subsection{The coupling}\label{sec:representation-new}
	Despite the fact that the stochastic process we aim at investigating is not a Markov process, one may try to adapt the techniques used to analyze the classical Ehrenfest urn model. Like in \cite[Sec. 6.5.2]{LP17}, a natural approach to bound from above the total variation distance at time $t$ consists in finding a coupling of the laws $\nu_t$ and $\nu_\star$ and control the probability that the coupling fails. \textcolor{black}{It is well-know that in the classical Ehrenfest urn model the ``natural coupling'' (see \cite[Sec. 6.5.2]{LP17}) does not suffices} to obtain the exact constant of the mixing time, but it provides an estimate that is off by a factor $2$. We will show that also for our model \textcolor{black}{an approach in the same spirit} does not provide the right prefactor. With this aim in mind we define, for an arbitrary $t\ge 0$ the following jointly independent sequences of random variables
		\begin{equation}\label{eq:definitions-new}
			\begin{split}
					&(X^\star_i)_{\textcolor{black}{1\le i \le N}}\overset{d}{=}\bigotimes_{i=1}^n{\rm Bern}(\tfrac12)\\
			(\tilde Z_i(t))_{\textcolor{black}{1\le i \le N-m}}\overset{d}{=}\bigotimes_{i=1}^{N-m}{\rm Bern}&(\textcolor{black}{e^{-\alpha t}})\,,\qquad
			(\tilde Z_i(t))_{\textcolor{black}{N-m< i \le N}}\overset{d}{=}\bigotimes_{i=N-m+1}^N{\rm Bern}(\textcolor{black}{e^{-t}})\ ,
					\end{split}
	\end{equation}
	and for any prescribed initial position $(r_0,h_0)\in\{0,\dots,N-m\}\times\{0,\dots,m\}$ set $y=(y_1,\dots,y_N)$ where
	\begin{equation}
		y_i=\begin{cases}
			1&\text{if }i\le r_0\text{ or }i> N-h_0\\
			0&\text{otherwise}
		\end{cases}\,.
	\end{equation}
	Now define
	\begin{equation}\label{eq:tildeX}
		\tilde X_i(t)=(1-\tilde Z_i(t))X_i^\star+\tilde Z_i(t) y_i\,, \qquad i\in[N]\,,
	\end{equation}
	and notice that 
	\begin{equation}
		\left(\sum_{i=1}^{N-m}\tilde X_i(t),\sum_{i=N-m+1}^N \tilde{X}_i(t)\right)\overset{d}{=}\big(R_t,H_t\big)\,,
	\end{equation}
	and, as a consequence,
	\begin{equation}\label{eq:215-1}
\sum_{i=1}^N \tilde{X}_i(t)\sim \nu_t^{r_0,h_0}\,.
	\end{equation}
	Indeed, the random variable $\tilde Z_i(t)$ stands for the event that the $i$-ball has never been selected up to time $t$ and, as modeled by \eqref{eq:tildeX}, if $\tilde Z_i(t)=0$ the $i$-th ball in uniformly distributed over the two urns (regardless of its initial position $y_i$).
	Under such an explicit coupled construction, we consider the event $$\mathcal{W}_t\coloneqq\big\{\textstyle\sum_{i=1}^N\tilde Z_i(t)=0\big\}\,.$$ 
	By the properties of the total variation distance and by Markov inequality,
	\begin{equation}\label{eq:ub-sep-new}
		\mathcal{D}_{\alpha,\textcolor{black}{m}}^{N,{\rm tv}}(t)\le \mathbb{P}(\mathcal{W}_t^c)=\mathbb{P}\left(\sum_{i=1}^{N}\tilde Z_i(t)>0\right)\le \mathbb{E}\bigg[\sum_{i=1}^N\tilde Z_i(t)\bigg]\ .
	\end{equation}
	Since,
	\begin{equation}
		\mathbb{E}\bigg[\sum_{i=1}^N\tilde Z_i(t)\bigg]= me^{-\alpha t}+(N-m)e^{-t}\ ,
	\end{equation}
	by plugging in the values of $t=t^{R}+O(1)$ or $t=t^{\rm dc}+O(\alpha^{-1})$ we see that the upper bound in \eqref{eq:ub-sep-new} is too weak to provide the sharp estimates in Theorem \ref{th:tv}.
	
	To get a better upper bound, the key observation lies in defining another coupling as follows. Let $\sigma$ be a uniformly random permutation on $[N]$, and call
	\begin{equation}
		(Z_i,Y_i)=(\tilde Z_{\sigma(i)}, y_{\sigma(i)})\,,\qquad i\in[N]\,,
	\end{equation}
	and
	\begin{equation}\label{eq:defX}
		X_i(t)=(1-Z_i(t))X_i^\star+Z_i(t) Y_i\,,\qquad i\in[N]\,.
	\end{equation}
	Since $X_i^\star\overset{d}{=}X_j^\star$ for all $i,j\in\{1,\dots, N\}$ and the random variables $(X_i^\star)_{1\le i\le N}$, $(\tilde Z_i(t))_{1\le i\le N}$ and $\sigma$ are independent, we deduce that
	\begin{equation}\label{eq:215-2}
		(X_i(t))_{1\le i\le N}\overset{d}{=}\big((1-\tilde Z_{\sigma(i)}(t))X^\star_{\sigma(i)}+\tilde{Z}_{\sigma(i)}(t)y_{\sigma(i)} \big)_{1\le i\le N}=(\tilde X_{\sigma(i)}(t))_{1\le i \le N}\,,
	\end{equation}
	and, by \eqref{eq:215-1} and \eqref{eq:215-2}, we conclude that
	\begin{equation}
	\sum_{i=1}^N {X}_i(t)\sim \nu_t^{r_0,h_0}\,.
	\end{equation}
	In what follows it will be convenient to work with a more direct representation of the sequence of random variables $(Z_i)_{1\le i\le N}$. Notice indeed that, defined the jointly independent sequences of random variables
	\begin{equation}\label{eq:definitions-PQ}
		\begin{split}
			&(P_i(t))_{\textcolor{black}{1\le i \le N}}\bigotimes_{i=1}^N{\rm Bern}(e^{-\alpha t})\ ,\qquad
			(Q_i(t))_{\textcolor{black}{1\le i \le N}}\bigotimes_{i=1}^N{\rm Bern}(e^{-t})\ ,\\
			 &(V_i)_{\textcolor{black}{1\le i \le N}}\overset{\rm d}{=}{\rm Unif}\big(\{v\in\{0,1\}^N\mid {\textstyle \sum_{i}}v_i=m \}\big)\ ,
		\end{split}
	\end{equation}
	we have
	\begin{equation}\label{eq:representation}
		(Z_i(t))_{1\le i\le N}\overset{d}{=}\big(V_iP_i(t)+(1-V_i)Q_i(t)\big)_{1\le i\le N}\,.
	\end{equation}
\end{color}

\section{Strategy of proof}
The strategy of proof is quite simple, and crucially relies on the recent result presented in \cite{S22}, which introduces a control on the $L^2(\pi)$ distance of a negatively dependent perturbation of a product measure. More precisely, the result we are going to exploit reads as follows:
\begin{lemma}\label{lemma:S22}\textcolor{black}{\cite[Lemma 1]{S22}}. Let $(X^\star_i)_{\textcolor{black}{1\le i \le N}}$ be as in \eqref{eq:definitions-new}, $(Z_i)_{\textcolor{black}{1\le i \le N}}$ arbitrarily distributed on $\{0,1\}^N$ but \emph{negatively dependent}, i.e.,
	\begin{equation}\label{eq:neg-dependence}
		\mathbb{E}\left[\prod_{i\in A}Z_i(t) \right]\le \prod_{i\in A}\mathbb{E}[Z_i(t)],\qquad \forall A\subseteq [N]\ ,
	\end{equation}
\textcolor{black}{and independent of $(X^\star_i)_{1\le i\le N}$.} 
	Then, called $\pi$ the law of $(X_i^\star)_{\textcolor{black}{1\le i \le N}}$ and $\mu$ the law of $(X_i)_{\textcolor{black}{1\le i \le N}}$, defined as
\begin{color}{black}
	$$X_i=(1-Z_i)X_i^\star+Z_iY_i,\qquad i=1,\dots,N\ ,$$
	for an arbitrary $(Y_i)_{1\le i\le N}\in\{0,1\}^N$,
	we have
\end{color}
	\begin{equation}\label{eq:lemmaS22}
		\left\| \frac{\mu}{\pi}-1\right\|_{L^2(\pi)}^2 
		\le \prod_{i=1}^N(1+\mathbb{E}[Z_i]^2)-1 \ .
	\end{equation}
	Moreover, if  the one in \eqref{eq:neg-dependence} is an equality, so is the one in \eqref{eq:lemmaS22}.
\end{lemma}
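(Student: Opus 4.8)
The plan is to pass to the Walsh--Fourier basis of $L^2(\pi)$, carry out a one-line computation of the Fourier coefficients of $\mu/\pi$, and then invoke negative dependence coordinatewise. Since $\pi$ is the uniform measure on $\{0,1\}^N$, the functions $w_S(x)\coloneqq\prod_{i\in S}(1-2x_i)$, indexed by $S\subseteq[N]$, form an orthonormal basis of $L^2(\pi)$ (indeed $\mathbb{E}_\pi[w_Sw_T]=\prod_{i\in S\triangle T}\mathbb{E}_\pi[1-2x_i]=\mathbf 1_{\{S=T\}}$). Writing $f\coloneqq\mu/\pi$, Parseval's identity together with $\widehat f(\emptyset)=\mathbb{E}_\mu[1]=1$ gives
\begin{equation*}
\left\|\frac{\mu}{\pi}-1\right\|_{L^2(\pi)}^2=\sum_{\emptyset\ne S\subseteq[N]}\widehat f(S)^2,\qquad \widehat f(S)=\langle f,w_S\rangle_{L^2(\pi)}=\sum_{x}\mu(x)\,w_S(x)=\mathbb{E}_\mu\!\left[\prod_{i\in S}(1-2X_i)\right].
\end{equation*}

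Next I would compute each $\widehat f(S)$ by conditioning on $(Z_i)_{1\le i\le N}$. For a fixed realization of $(Z_i)$ one has $1-2X_i=1-2X_i^\star$ on the event $\{Z_i=0\}$ and $1-2X_i=1-2Y_i$ on $\{Z_i=1\}$; since $(Z_i)_{1\le i\le N}$ is independent of $(X_i^\star)_{1\le i\le N}$, conditionally on $(Z_i)$ the variables $X_i^\star$ are still i.i.d.\ uniform on $\{0,1\}$, so $\mathbb{E}[1-2X_i^\star\mid (Z_i)]=0$. Hence averaging the product $\prod_{i\in S}(1-2X_i)$ over the $X_i^\star$'s annihilates it unless $Z_i=1$ for every $i\in S$, and, since $\prod_{i\in S}Z_i=\mathbf 1_{\{Z_i=1\ \forall i\in S\}}$ and $\prod_{i\in S}(1-2Y_i)\in\{-1,+1\}$ is deterministic,
\begin{equation*}
\widehat f(S)=\mathbb{E}\!\left[\Big(\prod_{i\in S}(1-2Y_i)\Big)\prod_{i\in S}Z_i\right]=\Big(\prod_{i\in S}(1-2Y_i)\Big)\,\mathbb{E}\!\left[\prod_{i\in S}Z_i\right],\qquad\text{so}\qquad \widehat f(S)^2=\mathbb{E}\!\left[\prod_{i\in S}Z_i\right]^2 .
\end{equation*}

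The conclusion then follows by summing. Because $\prod_{i\in S}Z_i\ge 0$, the negative dependence bound \eqref{eq:neg-dependence} reads $0\le\mathbb{E}[\prod_{i\in S}Z_i]\le\prod_{i\in S}\mathbb{E}[Z_i]$, whence $\widehat f(S)^2\le\prod_{i\in S}\mathbb{E}[Z_i]^2$; summing and using the elementary identity $\prod_{i=1}^N(1+a_i)=\sum_{S\subseteq[N]}\prod_{i\in S}a_i$ with $a_i=\mathbb{E}[Z_i]^2$, after subtracting the $S=\emptyset$ term, yields
\begin{equation*}
\left\|\frac{\mu}{\pi}-1\right\|_{L^2(\pi)}^2=\sum_{\emptyset\ne S\subseteq[N]}\mathbb{E}\!\left[\prod_{i\in S}Z_i\right]^2\le\sum_{\emptyset\ne S\subseteq[N]}\prod_{i\in S}\mathbb{E}[Z_i]^2=\prod_{i=1}^N\bigl(1+\mathbb{E}[Z_i]^2\bigr)-1 .
\end{equation*}
If \eqref{eq:neg-dependence} holds with equality for every $A\subseteq[N]$, each inequality above is an equality, giving the final claim. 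As for difficulty, there is no serious obstacle: the argument is essentially the Fourier identity above. The only points deserving care are the sign bookkeeping when rewriting $1-2X_i$ in terms of $Z_i,X_i^\star,Y_i$, and the use of $\mathbb{E}[\prod_{i\in S}Z_i]\ge 0$ to promote the \emph{linear} negative-dependence inequality to an inequality between \emph{squares} (without nonnegativity this last step could fail).
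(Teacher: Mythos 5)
Your proof is correct. The paper itself gives no proof of this lemma (it is imported verbatim from \cite[Lemma 1]{S22}), and your argument is essentially Salez's original one specialized to the symmetric $\mathrm{Bern}(\tfrac12)$ case: expand $\mu/\pi$ in the product (Walsh) basis, identify each coefficient as $\pm\,\mathbb{E}\big[\prod_{i\in S}Z_i\big]$ by conditioning on $(Z_i)_{1\le i\le N}$, and use nonnegativity to square the negative-dependence inequality before summing, with the equality case read off from the same identity.
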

Lemma \ref{lemma:S22} finds an ancestor in the exponential moment bound presented in \cite[Proposition 3.2]{MP12} (see also \cite[Lemma 3.1]{LS17}), and essentially shows that \emph{negative dependence} can be translated into a bound for the $L^2$ distance as the one in \eqref{eq:lemmaS22}, in a sharp sense.  As we will show in Section \ref{sec:tv-ub}, for every $t\ge 0$ the vector $(Z_i(t))_{\textcolor{black}{1\le i \le N}}$ in \eqref{eq:representation} is negatively dependent, hence providing a control on the total variation distance between $(X_i^\star)_{\textcolor{black}{1\le i \le N}}$ and $(X_i(t))_{\textcolor{black}{1\le i \le N}}$, so that the same control on the marginals given by the respective sums follows immediately by projection. As we will see in Section \ref{sec:tv-lb}, \textcolor{black}{the proof of the lower bound} is even simpler and relies on controlling the probability that $S^\star$ and $S(t)$ lie in the subinterval $\{0,1,\dots, \tfrac{N}{2}-k\}$, \textcolor{black}{for certain values of $k\in\{0,\dots, \tfrac{N}2\}$}. This is a particularly simple example of \textcolor{black}{what is usually referred to as} \emph{distinguishing statistic}, where the function considered is the indicator of the above mentioned intervals. It is well-known, see e.g. \cite[Sec. 7.3.1]{LP17}, that such a simple technique provides a sharp estimate for the mixing time in the classical Ehrenfest urn model. As we will show, this technique is effective also in our modified setting.
\subsection{Upper bound via Negative Dependence}\label{sec:tv-ub}
For notational simplicity, let us denote
$$\mu_t={\rm Law}(X_i(t),\:i\le N),\qquad\pi={\rm Law}(X_i^\star,\:i\le N)=\bigotimes_{i=1}^N{\rm Bern}(\tfrac12)\ .$$
Since both $S(t)$ and $S^\star$ are statistics of the laws $\mu_t$ and $\pi$, respectively, it is possible to bound from above 
\begin{equation}
\mathcal{D}_{\alpha,\textcolor{black}{m}}^{N,{\rm tv}}(t)\le \| \mu_t-\pi\|_{\rm tv}\ .
\end{equation}
We will show that for every given $t\ge 0$ the collection of random variables $(Z_i(t))_{\textcolor{black}{1\le i \le N}}$ defined in \eqref{eq:representation} is \emph{negatively dependent}, i.e., \eqref{eq:neg-dependence} holds.
Thanks to this fact, using the $L^2$ bound we get
\begin{equation}\label{eq:upper-bound}
	\begin{split}
		\| \mu_t-\pi\|_{\rm tv}\le\,\frac12\left\| \frac{\mu_t}{\pi}-1\right\|_{L^2(\pi)}
		\le&\:\frac12 \sqrt{\left[1+\frac14\left(\frac{m}{N}e^{-\alpha t}+\frac{N-m}{N}e^{-t}\right)^2\right]^N-1} \\
	\le&\,\textcolor{black}{\frac12 \sqrt{\left[1+\frac14\left(N^{\beta-1}e^{-\alpha t}+Ne^{-t}\right)^2\right]^N-1}\,.}
	\end{split}
\end{equation}
where the last inequality follows by Lemma \ref{lemma:S22} and \eqref{eq:mean-var-new}.
\begin{lemma}\label{lemma:neg-dep}
	For every $N\in\mathbb{N}$, $t\ge 0$ and $\alpha_N,\beta_N\in[0,1]$ the random vector $(Z_i(t))_{\textcolor{black}{1\le i \le N}}$ in \eqref{eq:representation}  satisfies \eqref{eq:neg-dependence}.
\end{lemma}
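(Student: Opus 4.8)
I want to show that the vector $(Z_i(t))_{1\le i\le N}$ given in \eqref{eq:representation} — which is $Z_i = V_i P_i(t) + (1-V_i)Q_i(t)$ with $(P_i)$ i.i.d.\ ${\rm Bern}(e^{-\alpha t})$, $(Q_i)$ i.i.d.\ ${\rm Bern}(e^{-t})$, and $(V_i)$ uniform over $\{0,1\}^N$ with $\sum_i V_i = m$, all jointly independent — satisfies the negative-dependence inequality \eqref{eq:neg-dependence}. The natural strategy is to condition on the permutation-type randomness $(V_i)$, show that conditionally the $Z_i$ are \emph{independent}, and then argue that the remaining average over $(V_i)$ preserves negative dependence because $(V_i)$ is itself a negatively associated family (a uniformly random binary vector with a fixed number of ones is the canonical example of negative association, e.g.\ via the Feder--Mihail / Joag-Dev--Proschan results on permutation distributions).

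\emph{Step 1: conditional independence and the conditional product formula.} Fix $A\subseteq[N]$. Conditionally on $(V_i)_{i\in[N]}$, the variables $Z_i$ are independent (each is a deterministic function of $V_i$ and the independent coordinate $P_i$ or $Q_i$), and each $Z_i$ is Bernoulli with parameter $V_i e^{-\alpha t} + (1-V_i)e^{-t}$. Hence
\begin{equation}\label{eq:cond-prod}
	\mathbb{E}\Big[\prod_{i\in A} Z_i \;\Big|\; (V_i)_{i}\Big] = \prod_{i\in A}\big(V_i e^{-\alpha t} + (1-V_i)e^{-t}\big) = \prod_{i\in A} f(V_i)\,,
\end{equation}
where $f(0)=e^{-t}$, $f(1)=e^{-\alpha t}$. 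Since $\alpha_N\le 1$ we have $0\le f(0)\le f(1)\le 1$, so $f$ is a bounded nondecreasing function of $V_i$.

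\emph{Step 2: average over $V$ using negative association.} The family $(V_i)_{1\le i\le N}$ is negatively associated: for any disjoint index sets and any coordinatewise-nondecreasing functions of the respective blocks, the covariance is nonpositive — this holds for the uniform distribution on binary strings with a fixed number of ones (it is a symmetric exchangeable family obtained by a uniform permutation of a fixed multiset, and negative association of such permutation distributions is classical). Applying negative association to the single-coordinate nondecreasing functions $f(V_i)$, $i\in A$, and using \eqref{eq:cond-prod}, we get
\begin{equation}\label{eq:na-bound}
	\mathbb{E}\Big[\prod_{i\in A} Z_i\Big] = \mathbb{E}\Big[\prod_{i\in A} f(V_i)\Big] \le \prod_{i\in A}\mathbb{E}[f(V_i)] = \prod_{i\in A}\mathbb{E}[Z_i]\,,
\end{equation}
where the last equality is again \eqref{eq:cond-prod} with $|A|=1$ together with the tower property. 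This is exactly \eqref{eq:neg-dependence}, proving the lemma. (If one prefers to avoid quoting negative association of the permutation distribution as a black box, an alternative is a direct combinatorial induction: writing $\mathbb{E}[\prod_{i\in A} f(V_i)]$ as a ratio of binomial coefficients summed over how many of the $A$-coordinates carry a one, one checks the inequality against the product of the marginals by an elementary monotonicity/convexity argument; this is the sampling-without-replacement version of the FKG-type bound.)

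\textbf{Main obstacle.} The only real content is Step 2: justifying that averaging the conditional product bound over $(V_i)$ does not break the inequality. Everything else is bookkeeping. I expect the cleanest write-up to invoke negative association of the uniform fixed-weight binary vector directly, but if the paper wants to stay self-contained and ``astonishingly simple'' in the spirit advertised in the introduction, I would instead present the explicit hypergeometric computation: condition further on the positions outside $A$ and reduce to showing $\mathbb{E}\big[\prod_{i\in A} f(V_i)\big]\le \prod_{i\in A}\mathbb{E}[f(V_i)]$ for $V$ a uniformly random $m$-subset of $[N]$ restricted to $A$, which after expanding in the number $j = \sum_{i\in A} V_i$ of heavy balls among the $A$-coordinates becomes a statement comparing $\sum_j \binom{|A|}{j}\binom{N-|A|}{m-j} f(1)^j f(0)^{|A|-j}$ with $\binom{N}{m}\,\big(\tfrac{m}{N}f(1)+\tfrac{N-m}{N}f(0)\big)^{|A|}$ — i.e.\ negative correlation of the number of heavy balls in a without-replacement sample, which is standard.
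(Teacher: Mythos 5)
Your proof is correct, but it takes a genuinely different route from the paper. You condition on $(V_i)_{1\le i\le N}$, observe that the conditional expectation of $\prod_{i\in A}Z_i(t)$ factorizes as $\prod_{i\in A}f(V_i)$ with $f(0)=e^{-t}\le f(1)=e^{-\alpha t}$, and then conclude by invoking negative association of the uniform fixed-weight binary vector (a permutation distribution, \`a la Joag-Dev--Proschan), iterating the two-block covariance inequality for nonnegative nondecreasing single-coordinate functions to get the full product bound; that iteration step, which you pass over quickly, is standard but does rely on $f\ge 0$ and $\alpha\le 1$, both of which hold here. The paper instead stays entirely self-contained: it computes both sides of \eqref{eq:neg-dependence} explicitly, rewriting $\prod_{i\in A}\mathbb{E}[Z_i(t)]$ as $e^{-t|A|}\,\mathbb{E}[u_t^{B}]$ with $B\sim{\rm Bin}(|A|,m/N)$ and $\mathbb{E}[\prod_{i\in A}Z_i(t)]$ as $e^{-t|A|}\,\mathbb{E}[u_t^{H}]$ with $H\sim{\rm Hypergeom}(N,m,|A|)$, where $u_t=e^{(1-\alpha)t}\ge1$, and then compares the two generating functions at $u\ge 1$ term by term via the factorial moments $\mathbb{E}[(B)_k]=(|A|)_k(m/N)^k\ge(|A|)_k(m)_k/(N)_k=\mathbb{E}[(H)_k]$. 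In substance both arguments reduce to the same with-replacement versus without-replacement comparison (your parenthetical ``alternative'' sketch is essentially the paper's computation); your version is shorter and more conceptual but quotes negative association of permutation distributions as a black box, while the paper's version is elementary and explicit, needing only the classical factorial-moment formulas for the binomial and hypergeometric laws.
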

\begin{proof}
Fix $A\subseteq[N]$. Called \textcolor{black}{$u_t=e^{(1-\alpha) t}\ge 1$}, we have
	\begin{align*}
		\mathbb{E}[Z_i(t)]^{|A|}&=\bigg(\frac{m}{N}\textcolor{black}{e^{-\alpha t}}+\frac{N-m}{N}\textcolor{black}{e^{-t}}\bigg)^{|A|}\\
		&=\frac{1}{N^{|A|}}\sum_{a=0}^{|A|} \binom{|A|}{a}m^a (N-m)^{|A|-a}\textcolor{black}{\left(e^{-\alpha t}\right)}^a\textcolor{black}{\left(e^{-t}\right)}^{|A|-a}
		\\
		&=\textcolor{black}{\left(e^{-t}\right)}^{|A|}\mathbb{E}\left[u_t^B\right],\qquad B\sim{\rm Bin}(|A|,\tfrac{m}{N})\ .
	\end{align*}
	For the expectation on the left-hand-side of \eqref{eq:neg-dependence} we have
	\begin{equation}\label{hyper}
		\begin{split}
		\mathbb{E}\left[\prod_{i\in A}Z_i(t)\right]
		&=\frac{1}{(N)_{|A|}}\sum_{a=0}^{|A|} \binom{|A|}{a}(m)_a (N-m)_{|A|-a}\textcolor{black}{\left(e^{-\alpha t}\right)}^a\textcolor{black}{\left(e^{-t}\right)}^{|A|-a}\\
		&=\sum_{a=0}^{|A|}\frac{\binom{m}{a}\binom{N-m}{|A|-a}}{\binom{N}{|A|}}\textcolor{black}{\left(e^{-\alpha t}\right)}^a\textcolor{black}{\left(e^{-t}\right)}^{|A|-a}\\
		&=\textcolor{black}{\left(e^{-t} \right)}^{|A|}\mathbb{E}\left[u_t^{H}\right],\qquad H\sim{\rm Hypergeom}(N,m,|A|)\ ,
		\end{split}
	\end{equation}
	where $(x)_k$ denotes the falling factorial $x(x-1)\cdots(x-k+1)$. 
	\begin{color}{black}
	Indeed, the expectation on the left-hand side of \eqref{hyper} can be computed by conditioning on the number of elements in $i\in A$ such that $V_i=1$. For every $a\in\{0,\dots,|A|\}$ the probability that $|\{i\in A\mid V_i=1 \}|=a$ is given by
	\begin{equation*}
		\frac{m\dots (m-a+1)\cdot (N-m)\dots (N-m-|A|+a+1) }{N(N-1)\cdots (N-|A|+1)}=\frac{(m)_a(N-m)_{|A|-a}}{(N)_A}\,,
	\end{equation*}
	and, conditionally on this event, for each $\bar i\in\{i\in A\mid V_i=1 \}$ the expectation of $Z_{\bar i}(t)$ is $e^{-\alpha t}$ (and similarly for $\bar i\in\{i\in A\mid V_i=0\}$).
	\end{color}
	 Therefore, to conclude the proof it is sufficient to show that for every $u\ge 1$ we have
	\begin{equation}\label{eq:MGF}
		\mathbb{E}\left[u^B\right]\ge \mathbb{E}\left[u^{H}\right]\ .
	\end{equation}
	Notice that for every $k\ge 1$ and $D=B,H$
	\begin{align}\label{eq:derivative}
		\frac{\rm d^k}{{\rm d}u^k}\mathbb{E}[u^D]\big\rvert_{u=1}=\mathbb{E}[(D)_ku^{D-k}]\big\rvert_{u=1}=\mathbb{E}[(D)_k]\ .
	\end{align}
	Moreover, \textcolor{black}{it is known that the factorial moments of $B$ and $H$ are given by (see, e.g., \cite{P53})}
	\begin{equation}\label{eq:comparison-fact-mom}
		\mathbb{E}[(B)_k]=(|A|)_k\frac{m^k}{N^k}\ge (|A|)_k\frac{(m)_k}{(N)_k}=\mathbb{E}[(H)_k]\ .
	\end{equation}
	Therefore \eqref{eq:MGF} follows by a finite (since for $k>|A|$ the coefficients are zero) Taylor expansion of  $\mathbb{E}\left[u^B\right]$ and $\mathbb{E}\left[u^{H}\right]$ around $u=1$ noticing that, thanks to \eqref{eq:derivative} and \eqref{eq:comparison-fact-mom}, \textcolor{black}{the coefficients of the former polynomials} are larger than or equal to those of the latter.
\end{proof}

\subsection{TV lower bound via distinguishing statistics}\label{sec:tv-lb}
\begin{color}{black}
In this section we will focus on the initial condition $(r,h)=(0,0)$, and we will call
\begin{equation}
	S(t)\coloneqq \sum_{i=1}^t X_i(t)\,,\qquad S^\star\coloneqq\sum_{i=1}^{t}X^\star_i\,,
\end{equation}
where the sequences $(X^\star_i)_{1\le i\le N}$ and $(X_i(t))_{1\le i\le N}$ are defined in \eqref{eq:definitions-new} and \eqref{eq:defX}, respectively. Notice that, with this initial condition and introducing the convenient notation
\begin{equation}
	p_t\coloneqq\frac12 (1-e^{-\alpha t})\,,\qquad q_t\coloneqq \frac12 (1-e^{-t})\,,
\end{equation}
 we get
\begin{equation}\label{eq:mean-var-new}
	\begin{split}
		\mathbb{E}[S(t)]=&\: mp_t+(\textcolor{black}{N}-m)q_t\ ,\\
		{\rm Var}(S(t))=&\: m p_t(1-p_t)+(\textcolor{black}{N}-m)q_t(1-q_t)\le \frac{\textcolor{black}{N}}{4}\ ,\\
		\sum_{i=1}^N \mathbb{E}[Z_i(t)]=&\: N\mathbb{E}[Z_1(t)]=\textcolor{black}{N-2\mathbb{E}[S(t)]}\ .
	\end{split}
\end{equation}
\end{color}
The lower bounds are proved by considering the events $\mathcal{E}^-_k\coloneqq\{0,1,\dots,N/2-k \}$, 
and, for appropriate choices of $t$ and $k$, we use the lower bound
\begin{equation}\label{eq:idea-lb}
	\mathcal{D}_{\alpha,\textcolor{black}{m}}^{N,{\rm tv}}(t)\ge \mathbb{P}(S(t) \in \mathcal{E}^-_k)-\mathbb{P}(S^\star \in \mathcal{E}^-_k)\textcolor{black}{\,= 1-\mathbb{P}(S(t) \not\in \mathcal{E}^-_k)-\mathbb{P}(S^\star \in \mathcal{E}^-_k)}\ .
\end{equation} 
\begin{color}{black}
	On the one hand, by Chebyshev inequality we have
\begin{equation}\label{eq:lower-bound-CLT-star-k}
	\mathbb{P}(S^\star\in\mathcal{E}^-_k)\le \mathbb{P}(|S^\star-\mathbb{E}[S^\star]|\ge k)\le \frac{N}{4k^2}\,,
\end{equation}
so that, choosing $k=\lceil \tfrac{c}{2}\sqrt{N}\rceil$, we obtain
\begin{equation}\label{eq:lower-bound-CLT-star}
	\mathbb{P}(S^\star\in\mathcal{E}^-_k)\le \frac{1}{c^2}\,.
\end{equation}
	On the other hand, by \eqref{eq:mean-var-new} and Chebyshev inequality
	\begin{equation}\label{eq:lower-bound-CLT-t}
		\begin{split}
			\mathbb{P}(S(t)\not\in\mathcal{E}_k^-)&=\mathbb{P}\left(S(t)-\mathbb{E}[S(t)]\ge \frac{N}{2}-\mathbb{E}[S(t)]-k+1\right)\\
			&\le \frac{N}{4}\left(\frac{N}{2}-\mathbb{E}[S(t)]-k+1 \right)^{-2}\,,
		\end{split}
	\end{equation}
	hence, \textcolor{black}{if for some $c>0$},
	\begin{equation}\label{eq:check}
		\frac{N}{2}-\mathbb{E}[S(t)]\ge \textcolor{black}{c}\, \sqrt{N}
	\end{equation}
	choosing $k=\lceil\tfrac{ \textcolor{black}{c}}{2}\sqrt{N}\rceil$  we obtain
	\begin{equation}\label{eq:lower-bound-CLT-t-k}
		\begin{split}
			\mathbb{P}(S(t)\not\in\mathcal{E}_k^-)\le\,\frac{1}{c^2}\,.
		\end{split}
	\end{equation}
\subsubsection{Refined lower bounds via CLT}\label{suse:CLT}
For the \emph{no cutoff} case in Theorem \ref{th:tv} we will need a refined estimate which consists in bounding the probabilities on the right-hand side of \eqref{eq:idea-lb} using the CLT rather than Chebyshev inequality. To do so, notice that
\end{color}
\begin{equation}
	\mathbb{P}(S^\star\in\mathcal{E}^-_k)=\mathbb{P}\left(2\sqrt{N}\left( \frac1N S^\star-\frac12\right)\le-\frac{2k}{\sqrt{N}}\right)\ ,
\end{equation}
so that, choosing $k=\lceil\tfrac{c}{2}\sqrt{N}\rceil$ for some $c>0$, by the Central Limit Theorem we have
\begin{equation}\label{eq:lower-bound-CLT-star-bis}
	\mathbb{P}(S^\star\in\mathcal{E}^-_k)\to\Phi(-c)\,,\qquad\text{ as \ } N\to\infty\ .
\end{equation}
Similarly,
\begin{equation}
	\begin{split}
		\mathbb{P}(S(t)&\not\in\mathcal{E}_k^-)=\\
		&\mathbb{P}\left(\frac{N}{\sqrt{{\rm Var}(S(t))}}\left(\frac{S(t)}{N}-\frac{\mathbb{E}[S(t)]}{N} \right)> \frac{1}{\sqrt{{\rm Var}(S(t))}}\left(\frac{N}{2}-\mathbb{E}[S(t)]-k\right)\right)\ ,
	\end{split}
\end{equation}
hence, if for some $c>0$
\begin{equation}
	\frac{N}{2}-\mathbb{E}[S(t)]\ge c \sqrt{N}
\end{equation}
choosing $k=\lceil\tfrac{c}{2}\sqrt{N}\rceil$ we obtain, for $t=t_N\to\infty$,
\begin{equation}\label{eq:lower-bound-CLT-t-k-bis}
	\begin{split}
		\mathbb{P}(S(t)\not\in\mathcal{E}_k^-)\le&\:\mathbb{P}\left(\frac{N}{\sqrt{{\rm Var}(S(t))}}\left(\frac{1}{N}S(t)-\mathbb{E}[S(t)] \right)> \frac{c}2\frac{\sqrt{N}}{\sqrt{{\rm Var}(S(t))}}\right)\\
		\to&\:1-\Phi\left(c \right)\ ,
	\end{split}
\end{equation}
where we used that $t=t_N\to\infty$ implies
$\sqrt{{\rm Var}(S(t)) N^{-1}}\to \frac12$, and in the last inequality that $N$ is sufficiently large.

\section{Proof of Theorem \ref{th:tv}}\label{sec:proofs}
We start by considering the case in which $\gamma_\infty<0$ and therefore, for every $N$ large enough, $\beta<\tfrac12(1+\alpha)$. 
\begin{proof}[Insensitivity]
	We start by proving the upper bound. Let $t^+=\tfrac12\log N + C$, for some $C>0$ to be determined later. Then, by \eqref{eq:upper-bound} we have, for all $N$ sufficiently large
	\begin{align*}
		\mathcal{D}_{\alpha,\textcolor{black}{m}}^{N,{\rm tv}}(t^+)\le& \frac12 \sqrt{\left[1+\frac14\left(N^{\beta-1-\frac\alpha{2}}e^{-C\alpha}+N^{-\frac12}e^{-C}\right)^2\right]^N-1} \\
		\le&\frac12 \sqrt{\left[1+\frac{e^{-2C}}{N}\right]^N-1} \le\sqrt{e^{e^{-2C}}-1}\ ,
	\end{align*}
	and the latter can be made arbitrarily small by taking $C\to\infty$. We now prove the lower bound and set $t^-=\tfrac12\log N - C$. 
	By \eqref{eq:mean-var-new} we have
	\begin{equation}\label{eq:eps}
		\frac{N}{2}-\mathbb{E}[S(t^-)]\textcolor{black}{\,=\frac{1}{2} \left(m N^{-\frac{\alpha}{2}}e^{\alpha C}+(N-m)N^{-\frac12}e^C \right)\ge \frac14\sqrt{N} e^C}\ ,
	\end{equation}
\textcolor{black}{where for the inequality we only used that if $\gamma_{\infty}<0$ then $\beta<1$, thus $m\le N/2$ for all $N$ large enough.}
	Hence, \eqref{eq:check} holds with $ \textcolor{black}{c}= \frac14 e^C$. Choosing $k=\lceil\frac{ \textcolor{black}{c}}{2}\sqrt{N}\rceil$, thanks to \eqref{eq:lower-bound-CLT-t-k} and \eqref{eq:lower-bound-CLT-star} we have 
	\begin{equation}\label{eq:lower-bound-subcritical}
		\mathbb{P}(S(t^-)\not\in\mathcal{E}_k^-)\le 16e^{-2C}\,,\qquad \mathbb{P}(S^\star\in\mathcal{E}_k^-)\le 16e^{-2C}\,.
	\end{equation}
	and the conclusion follows by \eqref{eq:idea-lb} and \eqref{eq:lower-bound-subcritical} by taking $C\to\infty$.
\end{proof}
We now prove the result for the phase in which $\gamma_\infty\ge 0$ and we first focus on the case in which $(2\beta-1)\log N\to\infty$.
\begin{proof}[Delayed cutoff]  For the upper bound, letting $t^+=\frac{1+\gamma}{2}\log N+C\alpha^{-1}=\frac{2\beta-1}{2\alpha}\log N + C\alpha^{-1}$ and using \eqref{eq:upper-bound} we get, for all $N$ sufficiently large,
	\begin{align*}
		\mathcal{D}_{\alpha,\textcolor{black}{m}}^{N,{\rm tv}}(t^+)\le& \frac12 \sqrt{\left[1+\frac14\left(N^{\beta-1-\beta+\frac12}e^{- C}+N^{-\frac12-\frac\gamma2}e^{-C\alpha^{-1}}\right)^2\right]^N-1} \\
		\le&\frac12\sqrt{\left[1+\frac{e^{-2 C}}{N}\right]^N-1} \le\sqrt{e^{e^{-2C}}-1}\ .
	\end{align*}
	For the lower bound, set $t^-=\frac{2\beta-1}{2\alpha}\log N-C\alpha^{-1}$ and notice that, \textcolor{black}{similarly to} \eqref{eq:eps},
	\begin{equation}
		\textcolor{black}{\,	\frac{N}{2}-\mathbb{E}[S(t^-)]\ge \frac{1}{2}m N^{\tfrac12-\beta}e^C\ge \frac14\sqrt{N}e^C}\,.
	\end{equation}
	 Therefore, the conclusion follows again by \eqref{eq:lower-bound-subcritical}.
\end{proof}
Finally, we consider the case in which $\gamma_\infty\ge 0$ and  $(2\beta-1)\log N\to \ell\in[0,\infty)$.

\begin{color}{black}
\begin{proof}[No cutoff] Let $C>0$ be a constant to be tuned later. Since $(2\beta-1)\log N\to \ell <\infty$, for all $\varepsilon>0$ we have
		\begin{equation}\label{eq:limit}
			2\beta-1<\frac{\ell+\varepsilon}{\log(N)}\,,
		\end{equation}
	for all $N$ sufficiently large. On the other hand, since $\gamma_\infty\ge 0$, we have $\lim_{N\to\infty}\frac{2\beta-1}{\alpha}\ge 1$ and by \eqref{eq:limit} we deduce that
	\begin{equation}\label{eq:beta-alpha}
		\alpha<\frac{\ell+\varepsilon}{\log(N)}\,,
	\end{equation}
for all $N$ large enough. By \eqref{eq:upper-bound}, and exploiting \eqref{eq:limit} and \eqref{eq:beta-alpha} we have: for all $\varepsilon>0$ and $N$ large enough
	\begin{equation}\label{eq:ub-super-gamma->infinity}
	\begin{split}
		\mathcal{D}_{\alpha,\textcolor{black}{m}}^{N,{\rm tv}}(C\alpha^{-1})\le& \frac12 \sqrt{\left[1+\frac14\left(N^{\beta-1}e^{- C}+Ne^{-C\alpha^{-1}}\right)^2\right]^N-1}\\ 
		\le& \frac12 \sqrt{\left[1+\frac14\left(N^{-\beta}e^{- C+\ell+\varepsilon}+N^{1-\frac{C}{\ell +\varepsilon}}\right)^2\right]^N-1}\,.
	\end{split}
\end{equation}
Choosing now $C>2(\ell+\varepsilon)$ we have
$$N^{-\beta}e^{- C+\ell+\varepsilon}>N^{1-\frac{C}{\ell +\varepsilon}}\,,$$
for all $N$ large enough, and therefore
	\begin{equation}\label{eq:ub-super-gamma->infinity-bis}
	\begin{split}
	\mathcal{D}_{\alpha,\textcolor{black}{m}}^{N,{\rm tv}}(C\alpha^{-1})
		\le& \frac12 \sqrt{\left[1+N^{-2\beta}e^{- 2(C-\ell-\varepsilon)}\right]^N-1}\\
		\le& \frac12 \sqrt{{\rm exp}(N^{-2\beta+1}e^{-2(C-\ell-\varepsilon)})-1}\le \frac12 \sqrt{e^{e^{-2(C-\ell-\varepsilon)-\ell+\varepsilon}}-1}\,,	
	\end{split}
\end{equation}
where in the last step we used that: for all $\varepsilon>0$
\begin{equation}\label{eq:limit-2}
	-(2\beta-1)<\frac{-\ell+\varepsilon}{\log(N)}\,,
\end{equation}
for all $N$ sufficiently large. In conclusion, for all $\varepsilon>0$ and $N$ large enough  
	\begin{equation}\label{eq:ub-super-gamma->infinity-tris}
	\begin{split}
		\mathcal{D}_{\alpha,\textcolor{black}{m}}^{N,{\rm tv}}(C\alpha^{-1})
		\le& \frac12 \sqrt{e^{e^{-2C+\ell+3\varepsilon}}-1}\,,	
	\end{split}
\end{equation}
and the latter is strictly smaller than $1$ as soon as $C=C(\ell)$ is sufficiently large.

For the lower bound we use the estimates in Section \ref{suse:CLT}. Indeed, for all $\varepsilon>0$ and $N$ large enough
	\begin{equation*}
		\frac{N}2-\mathbb{E}[S(C\alpha^{-1})]\ge \frac12 N^\beta e^{-C}\ge \frac12 \sqrt{N} e^{-C-\varepsilon}\,,
	\end{equation*}
where we used that
\begin{equation}
	2\beta-1> \frac{\ell-\varepsilon}{\log(N)}\qquad \Longrightarrow \qquad \beta> \frac12 - \frac{\varepsilon}{\log(N)}\,,
\end{equation}
for all $N$ sufficiently large. Therefore, by \eqref{eq:lower-bound-CLT-star-bis} and \eqref{eq:lower-bound-CLT-t-k-bis} with $ \textcolor{black}{c}=\frac12e^{-C-\varepsilon}$ and $k=\lceil\frac{ \textcolor{black}{c}}{2} \sqrt{N}\rceil$ we have: for all $\varepsilon,\delta>0$
	\begin{align*}
		\mathbb{P}(S(C\alpha^{-1})\not\in\mathcal{E}_k^-)\le1-\Phi(\tfrac12 e^{-C-\varepsilon})+\delta\,,\qquad \mathbb{P}(S^\star\in \mathcal{E}^-_k)\le \Phi(-\tfrac12 e^{-C-\varepsilon})+\delta\ ,
	\end{align*}
	so that, for all $C>0$ and every $N$ sufficiently large, thanks to \eqref{eq:idea-lb},
	\begin{equation}\label{eq:last}
		\mathcal{D}_{\alpha,\textcolor{black}{m}}^{N,{\rm tv}}(C\alpha^{-1})\ge \Phi(\tfrac12 e^{-C-\varepsilon})-\Phi(-\tfrac12 e^{-C-\varepsilon})-2\delta\,,
	\end{equation}
and, for any $C>0$, taking $\delta=\varepsilon=\varepsilon(C)$ sufficiently small, the right-hand side of \eqref{eq:last} is strictly larger than 0 for all $N$ large enough.
\end{proof}
\end{color}
This concludes the proof of Theorem \ref{th:tv}.
\section{Proof of Proposition \ref{prop:whole-chain} and Corollary \ref{coro}}\label{sec:proof-prop}
\begin{proof}[Proof of Proposition \ref{prop:whole-chain}]
\textcolor{black}{If $\tilde{\gamma}_\infty<0$ then $N-m\to\infty$}, hence the ``regular'' coordinate exhibits cutoff at $t^R$ with window of size $\Theta(1)$. Therefore, the lower bound in \eqref{eq:prop-insensitivity} follows by projection. Moreover, if \textcolor{black}{$m\to\infty$} the coordinate $(H_t)_{t\ge 0}$ exhibits cutoff at $t^H$ with a window of size $\Theta(\alpha^{-1})$. Hence, also the lower bound \eqref{eq:prop-delayed} follows by projection. \begin{color}{black}
To show the lower bound in \eqref{eq:prop-no-cutoff}, recall that $t_{\rm rel}=\alpha^{-1}$ and that in this case we have $\alpha\lesssim (\log N)^{-1}$.
	Then, by \cite[Lemma 20.12]{LP17}, at time $t=C\alpha^{-1}$ the ``heavy'' coordinate is at total-variation distance at least $\tfrac12 e^{-\frac{t}{t_{\rm rel}}}=\tfrac12e^{-C}$ from its stationary distribution, and therefore the desired lower bound follows again by projection.
\end{color}
	To get the upper bounds in \eqref{eq:prop-insensitivity} and \eqref{eq:prop-delayed} it is enough to argue as in the proof of \cite[Theorem 20.7]{LP17}: thanks to the product structure of the chain we have
	\begin{equation}\label{eq:LP}
		\max_{(r,h)\in\mathcal{X}}\| \chi^{r,h}_t-\chi\|_{\rm tv}^2\le 2 N^\beta e^{-2\alpha t}+  2 N e^{-2 t}\,.
	\end{equation}
	Hence, the upper bounds in \eqref{eq:prop-insensitivity}, \eqref{eq:prop-delayed} \textcolor{black}{ and \eqref{eq:prop-no-cutoff}} follow by choosing $t=t^R+C$, $t=t^H+C\alpha^{-1}$ \textcolor{black}{and $t=C\alpha^{-1}$}, respectively, for some $C=C(\varepsilon)$ large enough.
\end{proof}

\begin{color}{black}
\begin{proof}[Proof of Corollary \ref{coro}]
Regardless of the choice of the parameters we have
	\begin{equation}\label{eq:tmix}
		t_{\rm mix}\asymp t^R + t^H\asymp \log(N) +t^H\,,
	\end{equation}
	since the mixing time of the whole chain is always larger or equal than the mixing time of one of the two coordinates and smaller or equal than their sum. Using $t_{\rm rel}=\alpha^{-1}$ and \eqref{eq:tmix},  simple algebra shows that $\liminf t_{\rm mix}/t_{\rm rel}<\infty$ if and only if $\alpha+\beta\lesssim (\log N)^{-1}$ (at least along subsequences), and the latter implies $m\asymp 1$ and $\tilde{\gamma}_\infty=0$. Conversely, in the {\em insensitivity} regime the condition $\tilde{\gamma}_\infty$ ensures that $\alpha\asymp1$ and therefore $t_{\rm mix}/t_{\rm rel}\gg1 $; and, similarly, in the {\em delayed cutoff} regime the assumption $m\to\infty$ ensures that $\beta\gg (\log N)^{-1}$ and therefore $t_{\rm mix}/t_{\rm rel}\gg 1$.
\end{proof}
\end{color}

\begin{acks}  
The author is a member of GNAMPA-INdAM, and he thanks the German Research Foundation (project number 444084038, priority program SPP2265) for financial support. Moreover, the author wishes to thank Pietro Caputo and Federico Sau for helpful discussions on the topic, \textcolor{black}{and the anonymous referee for having suggested several key improvements on the first draft of this paper.}
\end{acks}


\end{document}